\theoremstyle{plain}
\newtheorem{thm}{Theorem}[section]
\newtheorem{lem}[thm]{Lemma}
\newtheorem{cor}[thm]{Corollary}
\theoremstyle{definition}
\newtheorem{defi}[thm]{Definition}
\newtheorem{rem}[thm]{Remark}
\newcommand{\Q}{\mathbb Q}
\newcommand{\R}{\mathbb R}
\newcommand{\Z}{\mathbb Z}
\newcommand{\nn}{\vskip 0.2cm}
\newcommand{\n}{\vskip 0.1cm}
\begin{document}

\title [\ ] {On transition functions of Topological Toric Manifolds}

\author{Li Yu}
\address{Department of Mathematics and IMS, Nanjing University, Nanjing, 210093, P.R.China
  \newline
     \textit{and}
 \newline
    \qquad  Department of Mathematics, Osaka City University, Sugimoto,
     Sumiyoshi-Ku, Osaka, 558-8585, Japan}

 \email{yuli@nju.edu.cn}


\keywords{Toric manifold, quasitoric manifold, Laurent
    monomial}

\thanks{2010 \textit{Mathematics Subject Classification}. Primary
  57S15; Secondary 14M25.\\
 The author is partially supported by
 the Japanese Society for
the Promotion of Sciences (JSPS grant no. P10018) and
 Natural Science Foundation of China (grant no.11001120).}


 \begin{abstract}
   We show that any topological toric manifold can be covered by finitely many
   open charts so that all the
   transition functions between these charts are Laurent monomials of
   $z_j$'s and $\overline{z}_j$'s.  In addition, we will describe
    toric manifolds and some special class of topological toric manifolds in terms of transition
   functions of charts up to (weakly) equivariant diffeomorphism.
  \end{abstract}

\maketitle

  \section{Background}

   Topological toric manifold was introduced as an analogue of toric manifolds
   (i.e. compact smooth toric varieties) in the category of closed smooth manifolds by H.~Ishida,
    Y.~Fukukawa and M.~Masuda, ~\cite{MasIshFuk2010}. By definition, a
   \textit{topological toric manifold} $X$ is a closed smooth manifold of dimension
   $2n$ with an effective smooth action of $(\mathbb{C}^*)^n$ so that
   $X$ has an open dense orbit, and $X$ is covered by finitely many
   $(\mathbb{C}^*)^n$-invariant open subsets each of which is equivariantly
   diffeomorphic to a smooth representation space of $(\mathbb{C}^*)^n$.
   Since the $(\mathbb{C}^*)^n$-action is effective, the smooth representation
   of $(\mathbb{C}^*)^n$ in each invariant open subset must be faithful, hence is isomorphic to
    a direct sum of complex one-dimensional smooth (linear) representation spaces of
    $(\mathbb{C}^*)^n$.  Notice that there is a
    \textit{canonical $(S^1)^n$-action} on $X$
    by restricting the $(\mathbb{C}^*)^n$-action to the standard
    compact torus $(S^1)^n \subset (\mathbb{C}^*)^n$. In this paper, we will always assume
    that a topological toric manifold $X$ is equipped with this
    $(S^1)^n$-action too. \nn

    It is shown in section 7 of~\cite{MasIshFuk2010} that a topological toric
    manifold $X$ is always simply connected.
    The orbit space of the $(S^1)^n$-action on $X$ is a nice manifold with
   corners whose faces (including the whole orbit space) are all
   contractible and, any intersection of faces is either connected
   or empty. So the orbit space looks like a simple polytope.
   In addition, similar to toric manifolds, the integral
   cohomology ring $H^*(X)$ is generated by the elements of
   $H^2(X)$.\nn

   In a $2n$-dimensional topological toric manifold $X$, there exist finitely many
   codimension-two closed connected submanifolds $X_1,\cdots, X_m$ in $X$
   each of which is fixed pointwise by some $\mathbb{C}^*$-subgroup of $(\mathbb{C}^*)^n$.
   In fact, the Poincar\'e dual to $X_1,\cdots, X_m$ form an integral basis
   of $H^2(X)$. Such $X_1,\cdots, X_m$ are called the \textit{characteristic submanifolds} of $X$.
    A choice of an orientation on each characteristic submanifold $X_i$ together with an orientation on $X$ is
    called an \textit{ominiorientation} on $X$. From $X_1,\cdots, X_m$, we can define an abstract simplicial
    complex $\Sigma(X)$ of dimension $n-1$ (with the empty set $\varnothing$ added) as following.
    \begin{equation}\label{Equ:Fan}
      \Sigma(X) := \{ I \subset [m]\, |\, X_I := \bigcap_{i\in I} X_i  \neq \varnothing \}
      \cup \{ \varnothing\}
    \end{equation}
   where $[m]$ denote the set $\{ 1,\cdots, m\}$.
   $\Sigma(X)$ is a \textit{pure} simplicial complex in the sense that any
   simplex in $\Sigma(X)$ is contained in some simplex of maximal
   dimension $(n-1)$.\nn

    Let
   $\Sigma^{(k)}(X)$ be the set of $(k-1)$-simplices in
   $\Sigma(X)$. Then $\Sigma^{(1)}(X)$ can be identified with $[m]$.
   It is shown by Lemma 3.6 in~\cite{MasIshFuk2010} that $X_1,\cdots, X_m$ intersect
   transversely with each other and any $X_I$ ($I\subset [m]$) is a
   closed connected submanifold of dimension $2(n-|I|)$ in $X$.
   The characteristic submanifolds of $X$
   play a similar role as invariant irreducible divisors
   of a toric variety.\nn

    The major difference between topological toric manifolds and toric manifolds is that
    the local action of $(\mathbb{C}^*)^n$ is equivariantly diffeomorphic to a smooth vs. algebraic
    linear representation space of $(\mathbb{C}^*)^n$.
    For example when $n=1$, any smooth complex one-dimensional representation
     of $\mathbb{C}^* = \R_{>0} \times S^1$
    corresponds to a smooth endomorphism of $\mathbb{C}^*$, which can be written in the following form:
   \begin{equation} \label{Equ:Smooth-Repre}
     g\ \longmapsto\ |g|^{b+\sqrt{-1} c} (\frac{g}{|g|})^v,\ \text{where}\ (b+\sqrt{-1} c , v)
      \in \mathbb{C}\times \Z .
    \end{equation}
  A smooth endomorphism of $\mathbb{C}^*$ in~\eqref{Equ:Smooth-Repre} is
  algebraic if and only if $b=v$ and
   $c=0$. So the group
   $\mathrm{Hom}(\mathbb{C}^*,\mathbb{C}^*)$ of \textit{smooth endomorphisms} of
   $\mathbb{C}^*$ is isomorphic to $\mathbb{C}\times \Z$, while the
   group   $\mathrm{Hom}_{alg}(\mathbb{C}^*,\mathbb{C}^*)$ of
   \textit{algebraic endomorphisms} of $\mathbb{C}^*$ is isomorphic to $\Z$.
  Hence there are much more smooth linear representations of
  $(\mathbb{C}^*)^n$ than algebraic ones. And so the family of topological toric manifolds
  is much larger than the family of toric manifolds.\nn

    There is another topological analogue of toric manifold introduced by Davis and
    Januszkiewicz~\cite{DaJan91} in the early 1990s, now called
    ``quasitoric manifold'' (see~\cite{BP02}).
    A \textit{quasitoric manifold} is an $2n$-dimensional smooth manifold
    with a locally standard $(S^1)^n$-action whose orbit space is a simple convex
    polytope. It is shown in section 10 of~\cite{MasIshFuk2010} that any quasitoric
     manifold has a structure of topological toric manifold. In fact, there are uncountably many
      topological toric manifold structure for any given quasitoric
      manifold $M$. But there is no canonical choice of such a structure for $M$.\nn

    It is shown in section 10 of~\cite{MasIshFuk2010} that the
     family of $2n$-dimensional topological toric manifolds with the canonical
     $(S^1)^n$-actions is strictly larger than the family of $2n$-dimensional quasitoric
     manifolds up to equivariant homeomorphisms. The relations between toric manifolds,
      quasitoric manifolds and
      topological toric manifolds can be explained by the
      diagram in Figure~\ref{p:Range-Map} (see chapter 5 of~\cite{BP02}). \nn

   \begin{figure}
  \begin{equation*}
   \vcenter{
            \hbox{
                  \mbox{$\includegraphics[width=0.8\textwidth]{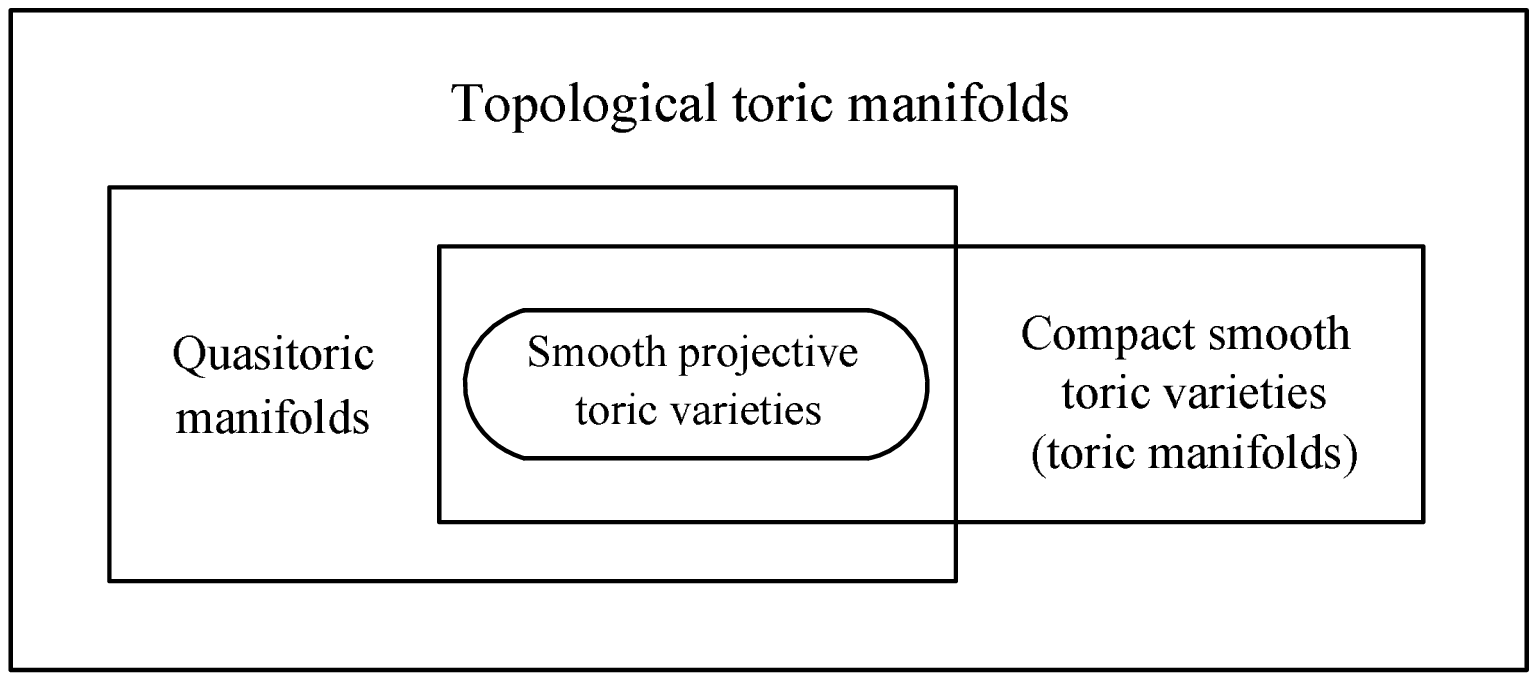}$}
                 }
           }
  \end{equation*}
  \caption{
      \label{p:Range-Map}  }
 \end{figure}

      In this paper, we will study topological toric manifolds from the viewpoint
       of transition functions of charts.
       The paper is organized as follows. In section 2, we will review
    the basic construction in~\cite{MasIshFuk2010} of a topological toric manifold $X(\Delta)$ from a
    complete non-singular topological fan $\Delta$, and define a set
    of open charts called \textit{normal charts} which cover the whole $X(\Delta)$.
    A very special property of these charts is that the transition
    functions between any two normal charts are
    completely determined by the topological fan $\Delta$. This
    allows us to give an equivalent description of toric manifolds in terms of transition
   functions of some charts up to (weakly) equivariant diffeomorphism (see
   Theorem~\ref{thm:Toric-Trans-Func}).
    In section 3, we will show that any topological toric manifold
    can be covered by finitely many $(\mathbb{C}^*)^n$-invariant open charts
     so that all the transition functions are Laurent monomials of
      $z_j$'s and $\overline{z}_j$'s (see Corollary~\ref{Cor:Trans-Func}).
        There should be some special geometrical properties on a topological toric manifold
   implied by the existence of such kind of atlas. It is interesting to
    see what these geometric properties are.
    In section 4, we will
   define the notion of \textit{nice topological toric manifold}.
   Similar to toric manifolds, we can also
    describe nice topological toric manifolds in terms of transition
   functions of charts up to (weakly) equivariant diffeomorphism (see Theorem~\ref{thm:Top-Toric-Trans-Func}).\nn

    In this paper, we will quote many lemmas and theorems from~\cite{MasIshFuk2010}.
    But we will avoid repeat of
    the full statements of those lemmas and theorems. Instead, we will only
      indicate where they are in~\cite{MasIshFuk2010}. So the reader should
     get familiar with the content of~\cite{MasIshFuk2010} before reading the arguments in
     this paper.\\

   \section{Topological fan, transition functions and Normal charts}

   Suppose a $2n$-dimensional topological toric manifold
   $X$ is equipped with an ominiorientation. Using the notations in the discussion in section 1,
    for each characteristic submanifold $X_i$ in $X$, $i\in [m] = \Sigma^{(1)}(X)$,
      the $\mathbb{C}^*$-subgroup of $(\mathbb{C}^*)^n$ which
   fixes $X_i$ determines a unique element
   $\lambda_{\beta_i(X)} \in \mathrm{Hom}(\mathbb{C}^*,
   (\mathbb{C}^*)^n)$ where $\beta_i(X) \in \mathbb{C}^n\times \Z^n$ (see Lemma 3.3 in~\cite{MasIshFuk2010}).
   So we have a map
    $$\beta(X): \Sigma^{(1)}(X) = [m] \rightarrow \mathbb{C}^n\times
    \Z^n, \ \, \beta(X)(i) = \beta_i(X) \ \text{for}\ \forall \, i\in [m]. $$

    It is shown by Lemma 3.8 in~\cite{MasIshFuk2010} that $\Delta(X) := (\Sigma(X), \beta(X))$
   is a complete non-singular \textit{topological fan} of dimension
   $n$. Roughly speaking, a topological fan consists of
   two simplicial fans which encode the information of the $\mathbb{C}^n$ and $\Z^n$
   components of $\beta(X)$, respectively (see section 3 of~\cite{MasIshFuk2010}
   for the precise definition).\nn

   Conversely, Theorem 8.1 in~\cite{MasIshFuk2010} shows that any
   complete non-singular topological fan $\Delta = (\Sigma, \beta)$ of dimension $n$ determines
   an ominioriented topological toric manifold $X(\Delta)$ of dimension $2n$ up
   to ominiorientation-preserving equivariant
   diffeomorphism. Indeed, $X(\Delta)$ is defined to be the following quotient space
      \begin{equation}\label{Equ:Cox-Constr}
      X(\Delta) := U(\Sigma) \slash \mathrm{Ker}(\lambda_{\beta})
      \end{equation}
  where $U(\Sigma)$ is an open subspace of $\mathbb{C}^m$
  which depends only on $\Sigma$ and $m=|\Sigma^{(1)}|$ is the number of $0$-simplices in $\Sigma$.
  More precisely,
    \begin{equation} \label{Equ:U}
      U(\Sigma):= \bigcup_{I\in \Sigma} U(I)
  \end{equation}
    $$U(I):= \{ (z_1,\cdots, z_m )\in \mathbb{C}^m\, |\, z_i\neq 0\
     \text{for}\ \forall\, i\notin I \}.$$
  \n

  In~\eqref{Equ:Cox-Constr}, $\lambda_{\beta} : (\mathbb{C}^*)^m \rightarrow (\mathbb{C}^*)^n$
  is a surjective group homomorphism determined by $\beta$. The natural action of $(\mathbb{C}^*)^m$
  on $\mathbb{C}^m$ leaves $U(\Sigma)$ invariant, so it induces an effective action of
  $(\mathbb{C}^*)^m\slash \mathrm{Ker}(\lambda_{\beta})$ on $X(\Delta)$ which has an open dense orbit.
  Since $\lambda_{\beta}$ is surjective,  $(\mathbb{C}^*)^m\slash \mathrm{Ker}(\lambda_{\beta})$
  is isomorphic to $(\mathbb{C}^*)^n$. In this way, $X(\Delta)$ get a $(\mathbb{C}^*)^n$-action
  with an open dense orbit.
  Moreover, if we restrict $\lambda_{\beta}$ to the standard compact torus
  $(S^1)^m \subset (\mathbb{C}^*)^m$, we get a surjective group homomorphism
   \begin{equation} \label{Equ:lambda-real}
   \widehat{\lambda}_{\beta} : (S^1)^m  \rightarrow (S^1)^n \subset
    (\mathbb{C}^*)^n.
  \end{equation} \nn
  If we write $\beta_i = (b_i+ \sqrt{-1} c_i, v_i) \in  \mathbb{C}^n\times \Z^n$,
  $i\in [m] = \Sigma^{(1)}$,
    then $\widehat{\lambda}_{\beta}$ is determined only by the component $\{ v_i\}_{i\in [m]}$ (see section 4
    of~\cite{MasIshFuk2010}).  It is clear that
    \begin{equation} \label{Equ:lambda-real-2}
         \mathrm{Ker}(\widehat{\lambda}_{\beta}) =
         \mathrm{Ker}(\lambda_{\beta}) \cap (S^1)^m \subset (\mathbb{C}^*)^m.
    \end{equation} \nn

  The correspondence between ominioriented topological toric manifolds
   and complete non-singular topological fans generalizes the
   classical correspondence between toric manifolds and complete non-singular
   fans (see~\cite{Danilov} or~\cite{Oda}).

   \nn

  \begin{defi}[Weakly Equivariant Diffeomorphism]
   For a Lie group $G$, two smooth $G$-manifolds $M$ and $N$ are called
    \textit{weakly equivariantly diffeomorphic via a
    diffeomorphism}
     $F : M \rightarrow N$ if there exists an automorphism $\sigma$ of $G$ so that
    $$ F(g\cdot x) = \sigma(g)\cdot F(x) \ \, \text{for any}\ g\in G\ \text{and any}\ x \in M.$$
     And we call $F$ a \textit{weakly equivariant diffeomorphism} between $M$ and $N$.
    If $\sigma$ can be taken to be the identity map of $G$, then
   $M$ and $N$ are called \textit{equivariantly diffeomorphic} and
   $F$ is called an \textit{equivariant diffeomorphism}.\n
    Similarly, we can define the notation of (weakly) equivariant
   homeomorphism by replacing ``diffeomorphism'' by
   ``homeomorphism'' in the above definition.
  \end{defi}

   \n

  It is shown in section 3 of~\cite{MasIshFuk2010} that
  there are three different levels of equivalence
   relations among complete non-singular topological fans which give isomorphism, equivariant diffeomorphism
    and equivariant homeomorphism among the corresponding ominioriented topological toric
    manifolds,
   respectively (see Lemma 3.9 in~\cite{MasIshFuk2010}). \nn

   From any complete non-singular topological fan $\Delta = (\Sigma, \beta)$ of dimension $n$, we can
     define an atlas $\mathcal{U}$ on the ominioriented topological toric manifold $X(\Delta)$
      whose charts are indexed by all the $(n-1)$-simplices $I \in \Sigma^{(n)}$.
   Indeed, the open chart in $\mathcal{U}$ corresponding to an $I\in \Sigma^{(n)}$ is
   \begin{equation} \label{Equ:Charts}
    \varphi_I : V_I = U(I) \slash \mathrm{Ker}(\lambda_{\beta}) \rightarrow \mathbb{C}^n.
   \end{equation}
    It is easy to see that each $V_I \subset X(\Delta)$ is \textit{invariant} under
     the $(\mathbb{C}^*)^n$-action on $X(\Delta)$, i.e. $g(V_I) \subset V_I$ for any $g\in (\mathbb{C}^*)^n$
      (see section 4 of~\cite{MasIshFuk2010}).
     We call each $\varphi_I : V_I \rightarrow \mathbb{C}^n$ a \textit{normal chart} of
     $X(\Delta)$.
      It is clear that $X(\Delta)$ is covered by finitely many normal charts
     $\{\varphi_I : V_I \rightarrow \mathbb{C}^n,\, I\in\Sigma^{(n)} \}$.
     When we observe the $(\mathbb{C}^*)^n$-action on $X(\Delta)$ through any
      normal chart $\varphi_I : V_I \rightarrow \mathbb{C}^n$,
     the $(\mathbb{C}^*)^n$-action is a faithful smooth linear representation.
     In other words, the $(\mathbb{C}^*)^n$-action in $V_I$ is equivariantly homeomorphic to
     a faithful smooth linear representation of $(\mathbb{C}^*)^n$ in $\mathbb{C}^n$ via
     the map $\varphi_I$.\nn

   Next, let us see what the characteristic submanifolds of $X(\Delta)$ are.
   We define
   \begin{equation} \label{Equ:Charts-2}
    U(\Sigma)_i := U(\Sigma) \cap \{ (z_1,\cdots, z_m) \in \mathbb{C}^m \, | \, z_i
    =0 \}, \,\ i \in \Sigma^{(1)}.
   \end{equation}

    Let $\Phi_{\Delta} : U(\Sigma) \rightarrow U(\Sigma)\slash \mathrm{Ker}(\lambda_{\beta})=X(\Delta)$
    denote the quotient map. Then all the characteristic submanifolds of
    $X(\Delta)$ are
     $$X(\Delta)_i = \Phi_{\Delta}(U(\Sigma)_i),\ \, i\in \Sigma^{(1)}.$$
     \nn

    Observe that for any $I\in \Sigma^{(n)}$, we have
     \begin{equation} \label{Equ:U(I)}
     U(I)= U(\Sigma) - \bigcup_{i\notin I}
     U(\Sigma)_i,
    \end{equation}
      \begin{equation} \label{Equ:V(I)}
    \text{and so}\quad  V_I =  \Phi_{\Delta}(U(I)) = X(\Delta) - \bigcup_{i\notin I}
    X(\Delta)_i. \qquad\ \
      \end{equation}
   So each open subset $V_I$ is the complement of
  a set of characteristic submanifolds $\{ X(\Delta)_i, i\notin I \}$ in $X(\Delta)$.
  \nn

   \begin{rem} \label{Rem:Char-SubMfd}
    If we consider $X(\Delta)$ as a $(S^1)^n$-manifold,
    we can similarly define the notion of characteristic
    submanifold of $X(\Delta)$ with respect to the
    $(S^1)^n$-action, which are submanifolds
    fixed pointwise by some $S^1$-subgroup of $(S^1)^n$. Indeed, this notion has been used
    in the study of quasitoric manifolds in~\cite{DaJan91} and~\cite{BP02}.
    It is easy to see that the characteristic submanifolds of
    $X(\Delta)$ with respect to the $(\mathbb{C}^*)^n$-action and
    the canonical $(S^1)^n$-action actually coincide.
   \end{rem}

     The transition functions between any two normal charts $\varphi_I: V_I \rightarrow \mathbb{C}^n$
    and $\varphi_J: V_J \rightarrow \mathbb{C}^n$
    are computed in Lemma 5.2 in~\cite{MasIshFuk2010} via the information
    of $\beta$. Let
     \begin{equation} \label{Equ:Beta}
       \beta_i = (b_i+ \sqrt{-1} c_i, v_i) \in   \mathbb{C}^n\times \Z^n, \
      i\in \Sigma^{(1)}.
     \end{equation}
     Here we consider $b_i,c_i \in \R^n$ and $v_i\in \Z^n$ as column vectors.
      The following are some easy consequences of
    Lemma 5.2 in~\cite{MasIshFuk2010}.\nn

   $\bullet$\   If $b_i=v_i$ and $c_i=0$ for all $i\in  \Sigma^{(1)}$, then
      $X(\Delta)$ is a toric manifold. In this case, the transition
      functions between any two charts in $\mathcal{U}$ is of the form
       $$\omega_j = f_j(z_1,\ldots, z_n), \ 1\leq j \leq n$$
      where each $f_j(z_1,\ldots, z_n)$ is a Laurent monomial in complex
      variables $z_1,\ldots, z_n$.\nn

      Conversely, Suppose $M^{2n}$ is a $2n$-dimensional closed
     smooth manifold with an effective smooth action of $(\mathbb{C}^*)^n$ having an open dense
       orbit. And we assume\n

       \begin{enumerate}
       \item[(i)] $M^{2n}$ is covered by finitely many $(\mathbb{C}^*)^n$-invariant open
       charts $\{ \phi_j : V_j \rightarrow \mathbb{C}^n\}_{1\leq j\leq r}$
       with all the transition functions being Laurent monomials of $z_1,\cdots, z_n$,\n

      \item[(ii)] In one chart $\phi_j : V_j \rightarrow \mathbb{C}^n$,
      the $(\mathbb{C}^*)^n$-action in $V_j$ is (weakly) equivariantly homeomorphic to
       an algebraic linear representation of $(\mathbb{C}^*)^n$ in $\mathbb{C}^n$
       via the map $\phi_j$.
      \end{enumerate}

      Then since the transition functions of $\{ \phi_j : V_j \rightarrow \mathbb{C}^n\}_{1\leq j\leq r}$
      are all Laurent monomials in
     $z_1,\ldots, z_n$, it is easy to see that for any other chart
     $\phi_{j'} : V_{j'} \rightarrow \mathbb{C}^n$,
       the $(\mathbb{C}^*)^n$-action in $V_{j'}$ is also (weakly) equivariantly homeomorphic to
       an algebraic linear representation of $(\mathbb{C}^*)^n$ in $\mathbb{C}^n$
       via the map $\phi_{j'}$.
      Hence
       $M^{2n}$ is (weakly) equivariantly diffeomorphic to a
       toric manifold.
        So we can describe toric manifolds up to (weakly) equivariant diffeomorphism in terms of
       transition functions of charts as follows.\nn

     \begin{thm} \label{thm:Toric-Trans-Func}
       Suppose $M^{2n}$ is a $2n$-dimensional closed smooth manifold with an
       effective smooth action of $(\mathbb{C}^*)^n$ having an open dense orbit.
       Then $M^{2n}$ is (weakly) equivariantly diffeomorphic
       to a toric manifold if and only if
       $M^{2n}$ can be covered by finitely many $(\mathbb{C}^*)^n$-invariant
       open charts $\{ \phi_j : V_j \rightarrow \mathbb{C}^n\}_{1\leq j\leq r}$
       so that all the transition functions between these charts are Laurent monomials of
        $z_1,\ldots, z_n$ and, for at least one chart
       $\phi_j : V_j \rightarrow \mathbb{C}^n$,
         the $(\mathbb{C}^*)^n$-action in $V_j$ is (weakly) equivariantly homeomorphic to
       an algebraic linear representation of $(\mathbb{C}^*)^n$ in $\mathbb{C}^n$
       via the map $\phi_j$.
   \end{thm}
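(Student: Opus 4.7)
The plan is to prove the two implications separately; the forward direction follows directly from the normal chart construction of Section~2, and the backward direction turns on the fact that a Laurent monomial diffeomorphism conjugates an algebraic linear $(\mathbb{C}^*)^n$-representation into another algebraic one.

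For the ``only if'' direction, assume $M^{2n}$ is (weakly) equivariantly diffeomorphic to a toric manifold $X(\Delta)$ via some $F$ with associated automorphism $\sigma$ of $(\mathbb{C}^*)^n$. Since $\Delta$ is the fan of an actual toric manifold we have $b_i = v_i$ and $c_i = 0$, so by Lemma~5.2 of~\cite{MasIshFuk2010} the transitions between the normal charts $\varphi_I$ become Laurent monomials in $z_1,\ldots,z_n$ with no conjugates $\overline{z}_j$. Pulling these charts back along $F$ yields an invariant atlas $\{\varphi_I \circ F\}$ on $M^{2n}$ whose transitions are unchanged, and in each chart the resulting action on $M^{2n}$ has the form $\rho_I \circ \sigma$, where $\rho_I$ is the original algebraic linear representation attached to $\varphi_I$; this is weakly equivariantly homeomorphic to the algebraic $\rho_I$ via $\sigma$ itself, fulfilling the hypothesis.

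For the ``if'' direction, I would propagate the algebraicity from the distinguished chart $\phi_1$ to every other chart. Say $\phi_1$ intertwines the action with $g \mapsto \rho(\sigma(g))$, where $\rho(h) = \mathrm{diag}(h^{a_1},\ldots,h^{a_n})$ is algebraic and $\sigma \in \mathrm{Aut}((\mathbb{C}^*)^n)$. For any chart $\phi_{j'}$ with $V_1 \cap V_{j'} \neq \varnothing$, the transition $\psi = \phi_{j'}\circ\phi_1^{-1}$ has components $\psi_i$ that are Laurent monomials in $z_1,\ldots,z_n$. Denoting the action through $\phi_{j'}$ by $T_{j'}$ and setting $T'_{j'} = T_{j'}\circ\sigma^{-1}$, the equivariance identity reads $\psi(\rho(h)z) = T'_{j'}(h)\psi(z)$. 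Substituting the diagonal form of $\rho$ into $\psi_i$ and collecting monomial exponents gives $\psi_i(\rho(h)z) = h^{b_i}\psi_i(z)$ for some $b_i \in \mathbb{Z}^n$, so $T'_{j'}(h)$ acts on the $i$-th coordinate by multiplication by the algebraic character $h^{b_i}$ on a dense open subset of $\mathbb{C}^n$; by continuity and the density of the principal orbit in every invariant chart, $T'_{j'}$ equals $h\mapsto\mathrm{diag}(h^{b_1},\ldots,h^{b_n})$ globally. Hence $T_{j'}$ is weakly equivariantly homeomorphic to an algebraic linear representation via the same $\sigma$. Charts disjoint from $V_1$ are handled by chaining through overlapping charts using connectedness of $M^{2n}$, at each step applying the same exponent-matching argument.

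Once every chart has been exhibited as (weakly) algebraic through the common automorphism $\sigma$, the integer vectors $\{b_i\}$ read off from the charts, together with the combinatorial data of which charts meet which characteristic submanifolds, assemble into a complete non-singular (classical) fan $\Sigma$; Theorem~8.1 of~\cite{MasIshFuk2010} then identifies $M^{2n}$ with $X(\Sigma)$ up to (weakly) equivariant diffeomorphism. The main obstacle I anticipate is the exponent-matching step: one must argue carefully that a Laurent monomial map in $z_j$ alone cannot conjugate an algebraic diagonal action into a genuinely non-algebraic smooth linear one. This relies crucially on the transitions being free of conjugate variables $\overline{z}_j$, precisely the feature that distinguishes this theorem from the more general case treated in Section~4.
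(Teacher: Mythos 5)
Your proposal is correct and follows essentially the same route as the paper: the ``only if'' direction comes from pulling back the normal charts of $X(\Delta)$ (whose transitions are Laurent monomials in $z_1,\ldots,z_n$ by Lemma 5.2 of~\cite{MasIshFuk2010} since $b_i=v_i$, $c_i=0$), and the ``if'' direction propagates algebraicity from the distinguished chart to all others via the monomial transitions. The paper dismisses that propagation as ``easy to see''; your exponent-matching computation $\psi_i(\rho(h)z)=h^{b_i}\psi_i(z)$, extended by density of the open orbit (which in fact lies in \emph{every} invariant chart, so no chaining is needed), is exactly the detail being elided.
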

   \n

    $\bullet$\ If $b_i$ is an integral vector congruent to $v_i$
    modulo $2$ and $c_i=0$ for all $i\in \Sigma^{(1)}$, then the transition
    functions between any two charts in $\mathcal{U}$ has the form
       $$\omega_j = f_j(z_1,\ldots, z_n,\overline{z}_1,\ldots, \overline{z}_n), \ 1\leq j\leq n$$
        where each $f_j(z_1,\ldots, z_n,\overline{z}_1,\ldots, \overline{z}_n)$ is a Laurent monomial
        of $z_1,\ldots, z_n, \overline{z}_1,\ldots, \overline{z}_n$,
        $$\text{i.e.}\quad \ f_j(z_1,\ldots, z_n,\overline{z}_1,\ldots,
        \overline{z}_n) = \prod^n_{i=1}z^{p_{ij}}_i
        \overline{z}^{q_{ij}}_i, \ \text{where $ p_{ij}, q_{ij} \in \Z$}.$$
 \nn

  \begin{defi}[Nice Topological Fan]
    A complete non-singular topological fan $\Delta=(\Sigma,\beta)$ of dimension $n$ is called \textit{nice}
    if for each $i\in \Sigma^{(1)}$,
     we have
      $$ \text{$\beta_i = (b_i+ \sqrt{-1} \cdot 0, v_i) \in   \mathbb{C}^n\times \Z^n$
     where $b_i \in \Z^n$ and $b_i\equiv v_i \mod 2$}. $$
     By the above discussions, for
     any nice topological fan $\Delta=(\Sigma,\beta)$ of dimension $n$, the corresponding
     topological toric manifold $X(\Delta)$
     can be covered by finitely many $(\mathbb{C}^*)^n$-invariant open charts
     $ \{ \varphi_I : V_I \rightarrow \mathbb{C}^n \}_{I\in \Sigma^{(n)}}$
      with all the transition functions being Laurent monomials of
   $z_1,\ldots, z_n, \overline{z}_1,\ldots, \overline{z}_n$.
   \end{defi}

   In section 5 of~\cite{MasIshFuk2010}, an explicit ominioriented topological toric manifold structure
   on $\mathbb{C} P^2 \# \mathbb{C} P^2$ is constructed. The corresponding topological fan is nice.
    But $\mathbb{C} P^2 \# \mathbb{C} P^2$ is not a toric manifold.
    So the family of topological toric manifolds is strictly bigger
    than the family of toric manifolds. In addition, it is natural
    to ask the following question.
    \nn

 \noindent  \textbf{Question:} Among all ominioriented topological toric manifolds,
  how many of them have nice topological fans?\nn

   An answer to this question will be given by
   Theorem~\ref{thm:Fan-Deform-Nice} and
  Theorem~\ref{thm:Nice-Fan} in the next section.\\

  \section{Transition functions of Topological toric manifolds}

  According to the discussion in section 2, an ominioriented topological toric
  manifold with a nice topological fan can be covered by finitely many open
  charts with all the transition functions
  being Laurent monomials of $z_1,\ldots, z_n, \overline{z}_1,\ldots, \overline{z}_n$.
  In this section, we will show that the topological fan of any
  ominioriented topological toric manifold can be turned into a nice topological fan
   via a regular deformation defined as following.
  \nn

   \begin{defi} [Regular Deformation]
     Suppose $\Sigma$ is an $(n-1)$-dimensional (abstract) pure simplicial complex and
  $\Delta(t) = (\Sigma, \beta(t))$, $0\leq t \leq 1$, is a
  family of complete non-singular topological fans of dimension $n$
  which depend on the parameter $t$ smoothly. Then we call $\Delta(t)$
   a \textit{regular deformation} of $\Delta(0)$. Let
    \begin{equation} \label{Equ:Beta(t)}
       \beta_i(t) = (b_i(t)+ \sqrt{-1} c_i(t), v_i(t)) \in   \mathbb{C}^n\times \Z^n, \
      i\in \Sigma^{(1)}.
     \end{equation}
   It is clear that $v_i(t) = v_i(0) \in \Z^n$ for each $ i\in \Sigma^{(1)}$ since
   $\Z^n$ is a discrete set. So each $v_i(t)$ is independent on the parameter $t$.
    Hence for any complete non-singular topological fan $\Delta = (\Sigma, \beta)$,
    there always exists some other complete non-singular topological
    fan $\Delta'= (\Sigma, \beta')$ so that $\Delta$ can not be turned into $\Delta'$
       via any regular deformation.
  \end{defi}

  The following lemma is the key to the main results of this paper.\nn

 \begin{lem} \label{Lem:Deformation}
  Suppose
  $\Delta(t) = (\Sigma, \beta(t))$, $0\leq t \leq 1$ is
  a regular deformation of complete non-singular topological fans of dimension $n$. Then there exists a diffeomorphism
  $\psi : X(\Delta(0)) \rightarrow X(\Delta(1))$ with the following properties.
  \begin{enumerate}
     \item[(i)] $\psi$ is equivariant with respect to the canonical $(S^1)^n$-action
       on $X(\Delta(0))$ and $X(\Delta(1))$.\n

     \item[(ii)] $\psi$ maps each open subset
     $V_I(0) = U(I) \times \{ 0 \} \slash \mathrm{Ker}(\lambda_{\beta(0)})$
         in $X(\Delta(0))$ to $V_I(1) = U(I) \times \{ 1 \} \slash \mathrm{Ker}(\lambda_{\beta(1)})$
          in $X(\Delta(1))$ for any $I\in \Sigma^{(n)}$.
  \end{enumerate}
 \end{lem}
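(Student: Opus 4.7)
The plan is to construct $\psi$ as the time-$1$ map of a smooth isotopy obtained from an Ehresmann trivialization of the ``moduli'' family over $[0,1]$ whose fiber at $t$ is $X(\Delta(t))$.

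First, I would note that since $\mathbb{Z}^n$ is discrete and $\beta(t)$ depends smoothly on $t$, the integer vectors $v_i(t)$ are independent of $t$; by \eqref{Equ:lambda-real-2} the subgroup $\ker(\widehat\lambda_{\beta(t)})\subset(S^1)^m$ is therefore independent of $t$, and consequently the canonical $(S^1)^n$-action on $X(\Delta(t))$ can be viewed as one and the same residual action of $(S^1)^m/\ker(\widehat\lambda_{\beta(0)})$ on every fiber. This is what will supply the equivariance in (i).

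Second, I would form the total space
$$Y := (U(\Sigma)\times[0,1])\big/\sim,\quad (z,t)\sim(k\cdot z,t)\ \text{whenever}\ k\in\ker(\lambda_{\beta(t)}),$$
and let $\pi:Y\to[0,1]$ be the projection, so that $\pi^{-1}(t)=X(\Delta(t))$. The normal charts $\varphi_I^{(t)}:V_I(t)\to\mathbb{C}^n$ from \eqref{Equ:Charts} depend smoothly on $t$ through $\beta(t)$, so the restriction of $\pi$ to the open set $\bigsqcup_t V_I(t)\subset Y$ is smoothly identified with $\mathbb{C}^n\times[0,1]$. These identifications endow $Y$ with a smooth structure turning $\pi$ into a proper smooth submersion (properness because each $X(\Delta(t))$ is compact). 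Ehresmann's fibration theorem, applied over the contractible base $[0,1]$, then yields a smooth trivialization, equivalently an isotopy $\psi_t:X(\Delta(0))\to X(\Delta(t))$ of diffeomorphisms with $\psi_0=\mathrm{id}$, and I would take $\psi=\psi_1$.

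The delicate step is choosing the Ehresmann connection so that both (i) and (ii) hold. I would take it to be the orthogonal complement of the vertical tangent spaces with respect to a Riemannian metric on $Y$ that is (a) invariant under the fiberwise $(S^1)^n$-action (which makes sense because that action does not vary with $t$, and such a metric is obtained by averaging any Riemannian metric over the compact group), and (b) adapted to the stratification of $Y$ by the sub-bundles $\bigsqcup_t X(\Delta(t))_i$, one for each $i\in\Sigma^{(1)}$, each of which is a smooth closed submanifold of $Y$ as the image of $U(\Sigma)_i\times[0,1]$. Condition (a) makes the horizontal flow $(S^1)^n$-equivariant, giving (i); condition (b) makes the horizontal lift of $\partial_t$ tangent to each characteristic sub-bundle, so the flow preserves them and hence also their complements $\bigsqcup_t V_I(t)$, giving (ii). The principal obstacle is arranging (a) and (b) simultaneously, but this is standard once one observes that every characteristic sub-bundle is itself $(S^1)^n$-invariant (being the fixed locus of a $\mathbb{C}^*$-subgroup), so an averaged, stratification-adapted metric exists via the usual equivariant tubular neighborhood construction applied inductively to the nested intersections of the $X(\Delta(t))_i$.
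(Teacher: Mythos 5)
Your proposal follows the same overall architecture as the paper's proof: form the total space $Y=U(\Sigma)\times[0,1]\slash\sim$, give it a smooth structure by letting the normal charts vary with $t$, observe that the $(S^1)^n$-action is constant in $t$ because the $v_i$ are, and run an equivariant refinement of Ehresmann. The differences are in the last step. Where you average a Riemannian metric and take the orthogonal complement of the vertical distribution, the paper averages the horizontal vector field itself: it takes local lifts $Y_I$ of $\partial/\partial t$ on the chart domains $U(I)\times[0,1]\slash\Lambda^{-1}(e)$, averages each over $(S^1)^n$ by Haar measure, and patches with an $(S^1)^n$-invariant partition of unity. This is worth noting because the conditions you need ($p_*Y=\partial/\partial t$, invariance, tangency to the characteristic sub-bundles) are convex in the vector field but \emph{not} in the metric: a convex combination of metrics each of whose vertical-orthogonal complements lies in $T_yS_i$ need not again have this property, so your ``averaged, stratification-adapted metric'' requires a more careful inductive tubular-neighborhood argument than the one sentence you give it. More importantly, your condition (b) is unnecessary: the paper derives (ii) from (i) for free. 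Since the circle subgroup fixing $X(\Delta(t))_i$ pointwise is determined by $v_i$ and hence independent of $t$, any $(S^1)^n$-equivariant $\psi$ carries $X(\Delta(0))_i$ to $X(\Delta(1))_i$ (Remark~\ref{Rem:Char-SubMfd}), and then \eqref{Equ:V(I)} expresses $V_I$ as the complement of the $X(\Delta)_i$ with $i\notin I$, so $\psi(V_I(0))=V_I(1)$ follows. Adopting this observation would let you drop (b) entirely and keep only the invariant metric (or, more simply, the invariant vector field). Finally, you assert the smooth structure on $Y$ without comment; since the fibers are quotients by the non-compact groups $\mathrm{Ker}(\lambda_{\beta(t)})$, the Hausdorffness of $Y$ and the smoothness of the chart transitions \eqref{Equ:Trans-Func} (which uses that the dual sets $\alpha^I_i(t)$ vary smoothly in $t$) do need the explicit verification the paper supplies, though both checks go through routinely.
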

 \begin{proof}
   By definition, $X(\Delta(t)) = U(\Sigma) \slash \mathrm{Ker}(\lambda_{\beta(t)})$,
    $0\leq t \leq 1$. Let $m = |\Sigma^{(1)}|$ be the number of
    $0$-simplices in $\Sigma$.
    Then we have a smooth map $\Lambda$ defined by
      \begin{align*}
    \Lambda: (\mathbb{C}^*)^m \times [0,1] &\longrightarrow \, (\mathbb{C}^*)^n\\
                 (\, g\, ,\, t\, )\ &\longmapsto \lambda_{\beta(t)}(g)
    \end{align*}
      $$ \text{Let}\quad \Lambda_t^{-1}(e) : =\Lambda^{-1}(e) \cap ((\mathbb{C}^*)^m \times \{ t
   \}), \ t \in [0,1], $$
   where $e$ is the unit element of $(\mathbb{C}^*)^n$.
    Obviously, $ \Lambda_t^{-1}(e) \cong \mathrm{Ker}(\lambda_{\beta(t)})$, $t\in [0,1]$.
     \nn

   Similarly, for $ (S^1)^m \times [0,1] \subset (\mathbb{C}^*)^m \times [0,1]$,
   we have a smooth map
      \begin{align*}
    \widehat{\Lambda}: (S^1)^m \times [0,1] &\longrightarrow \, (S^1)^n \subset (\mathbb{C}^*)^n\\
                 (\, g\, ,\, t\, )\ &\longmapsto
                 \widehat{\lambda}_{\beta(t)}(g) \ \, \text{(see~\eqref{Equ:lambda-real})}
    \end{align*}
     $$ \text{Let}\ \ \widehat{\Lambda}_t^{-1}(e)
     : =\widehat{\Lambda}^{-1}(e) \cap ((S^1)^m \times \{ t \}) \cong
      \mathrm{Ker}(\widehat{\lambda}_{\beta(t)}), \ t \in
     [0,1]. $$\nn

      Suppose $\beta_i(t) = (b_i(t)+ \sqrt{-1} c_i(t), v_i) \in \mathbb{C}^n\times \Z^n, \  i\in
  [m]=\Sigma^{(1)}$, where $v_i \in \Z^n$ is a fixed vector for each
  $i\in [m]$. Then since $\widehat{\lambda}_{\beta(t)}$ is determined only by $\{ v_i \}_{i\in
  [m]}$, so $\widehat{\lambda}_{\beta(t)}$ is independent on $t$.
  Hence we have
   \begin{equation} \label{Equ:Bundle-Trivial}
     \widehat{\Lambda}^{-1}(e) =  \widehat{\Lambda}_0^{-1}(e) \times
   [0,1] \subset (S^1)^m \times [0,1].
   \end{equation}

   Next, we introduce an
   equivalence relation between any two points $(x,t)$ and $(x',t')$ in
    $\mathbb{C}^m \times [0,1]$ by
   \[ (x,t) \sim (x',t')\, \Longleftrightarrow\,  t'=t,\
    x'= g\cdot x \ \text{for some}\ g\in  \Lambda_t^{-1}(e)\]
   Let the quotient space of $U(\Sigma) \times [0,1]$ with respect to
   $\sim$ be denoted by
   $$U(\Sigma) \times [0,1] \slash \Lambda^{-1}(e).$$
   We use $[(x,t)]$ to denote the equivalence class of $(x,t)$ in
   $U(\Sigma) \times [0,1] \slash \Lambda^{-1}(e)$.
  We remark that $\Lambda^{-1}(e) \subset (\mathbb{C}^*)^m \times [0,1]$ is not a
   group although each $\Lambda_t^{-1}(e)$ is.\nn

   \textbf{Claim:} $U(\Sigma) \times [0,1] \slash  \Lambda^{-1}(e)$
      is a smooth manifold (with boundary).\nn

   First, we need to show that
   $U(\Sigma) \times [0,1] \slash  \Lambda^{-1}(e)$ with the
   quotient topology is Hausdorff. Let
   $[(x,t)]$ and $[(x',t')]$ be two different points in $U(\Sigma) \times [0,1] \slash
   \Lambda^{-1}(e)$.
   \begin{itemize}
     \item[(a)] if $t \neq t'$, then there exists a small real number $\varepsilon >0$ so that
     $(t-\varepsilon, t+\varepsilon)$ and $(t'-\varepsilon,
     t'+\varepsilon)$ are disjoint subintervals of $[0,1]$. Then
     $[(x,t)]$ and $[(x',t')]$ are
     contained in disjoint open subsets $U(\Sigma) \times (t-\varepsilon, t+\varepsilon) \slash
   \Lambda^{-1}(e)$ and $U(\Sigma) \times (t'-\varepsilon, t'+\varepsilon) \slash
   \Lambda^{-1}(e)$, respectively.\n

     \item[(b)] if $t=t'$ and $x\neq x'$, then  $[(x,t)]$ and $[(x',t')]=[x',t]$ are both
     contained in
     $U(\Sigma)\times \{ t\}
   \slash  \Lambda^{-1}(e) = U(\Sigma)\times \{ t\}
   \slash  \Lambda_t^{-1}(e) \cong U(\Sigma) \slash \mathrm{Ker}(\lambda_{\beta(t)}) =
    X(\Delta(t))$. Since $X(\Delta(t))$ is Hausdorff (see Lemma 6.1
    in~\cite{MasIshFuk2010}), so there exist two disjoint open subsets $W$ and $W'$ of $U(\Sigma)$
    with
  $[(x,t)] \in W\times \{ t\} \slash  \Lambda^{-1}_t(e)$ and $[(x',t)] \in W'\times \{ t \}\slash
     \Lambda^{-1}_t(e)$.
    Then $[(x,t)]$ and $[(x',t')]$ are
     contained in disjoint open subsets $W\times [0,1] \slash
   \Lambda^{-1}(e)$ and $W'\times [0,1] \slash \Lambda^{-1}(e)$, respectively.
      \end{itemize}
   So in any case, $[(x,t)]$ and $[(x',t')]$ can be separated by disjoint
   open subsets of $U(\Sigma) \times [0,1] \slash \Lambda^{-1}(e)$. This
   means that $U(\Sigma) \times [0,1] \slash  \Lambda^{-1}(e)$ is Hausdorff.   \n

    Second, we need to define a smooth structure on $U(\Sigma) \times [0,1] \slash  \Lambda^{-1}(e)$.
   Notice that we can cover $U(\Sigma) \times [0,1] \slash  \Lambda^{-1}(e)$ by
   finitely many open subsets
   $$\{ U(I)\times [0,1] \slash \Lambda^{-1}(e) \, ;\,I\in \Sigma^{(n)}
   \}.$$
   For any $I\in \Sigma^{(n)}$, let $\mathbb{C}^I$ be the affine
   space $\mathbb{C}^n$ with coordinates indexed by the elements of
   $I$. For any $t\in [0,1]$, let $\{ \alpha^{I}_i (t)\}_{i\in I}$ be the dual set of
   $\{ \beta_i (t) \}_{i\in I}$.
    Then similar to (5.1) in~\cite{MasIshFuk2010}, we define
   \begin{equation} \label{Equ:Charts-2}
    \overline{\varphi}_I: U(I)\times [0,1] \slash \Lambda^{-1}(e) \rightarrow
    \mathbb{C}^I\times [0,1]
   \end{equation}
   \[ \text{by}\quad \overline{\varphi}_I \left([(z_1,\ldots, z_m,t)]\right) =
   \left((\prod^m_{k=1} z^{\langle \alpha^I_{i}(t), \beta_k(t)\rangle}_k)_{i\in
   I},t \large\right)
     = \left((z_i \prod_{k\notin I} z^{\langle \alpha^I_{i}(t), \beta_k(t)\rangle}_k)_{i\in I},t \right)\]
     where $(z_1,\ldots, z_m) \in U(I) \subset \mathbb{C}^m$ and $t\in [0,1]$.
    The definitions of dual set and the pairing $\langle \, ,\, \rangle$
    are given in section 2 of~\cite{MasIshFuk2010}.\nn

    Here since $\beta_i(t)$ varies
     smoothly with respect to the parameter $t$ for any $i\in [m]$, so does $\alpha^I_i(t)$ for any $i\in I$.
     Therefore, the map $\overline{\varphi}_I$ is continuous. And by
     the same argument in section 5 of~\cite{MasIshFuk2010}, we can
     show that $\overline{\varphi}_I$ is a homeomorphism. So we have
     a finite set of charts $\{ \overline{\varphi}_I: U(I)\times [0,1] \slash \Lambda^{-1}(e) \rightarrow \mathbb{C}^I\times
   [0,1]\}_{I\in \Sigma^{(n)}}$ which cover $U(I)\times [0,1] \slash \Lambda^{-1}(e)$.
   Moreover, we can see from Lemma 5.2 in~\cite{MasIshFuk2010} that
   the transition function from such a chart
   indexed by $I\in \Sigma^{(n)}$ to another one indexed by $J\in \Sigma^{(n)}$ is
  $ \overline{\varphi}_J \circ (\overline{\varphi}_I)^{-1} :   \mathbb{C}^I\times [0,1] \rightarrow
     \mathbb{C}^J\times [0,1]$ where
   \begin{equation} \label{Equ:Trans-Func}
    \overline{\varphi}_J \circ (\overline{\varphi}_I)^{-1}
  \left( (z_i)_{i\in I} , t \right)
   = \left( (\prod_{i\in I} z_i^{\langle\alpha^J_j(t),\beta_i(t) \rangle})_{j\in J} , t
   \right),\  \left( (z_i)_{i\in I} , t \right)\in \mathbb{C}^I\times
   [0,1].
   \end{equation}
   This function is smooth since $z_i\neq 0$ for $i \in I\setminus J$ and $\langle\alpha^J_j(t),\beta_i(t)
   \rangle = \delta_{ji} \mathbf{1}$ for $i\in J$. Therefore,
   $\{ \overline{\varphi}_I: U(I)\times [0,1] \slash \Lambda^{-1}(e) \rightarrow \mathbb{C}^I\times
   [0,1]\}_{I\in \Sigma^{(n)}}$ determines a smooth structure on $U(\Sigma)\times [0,1] \slash
   \Lambda^{-1}(e)$.  So the claim is proved. \nn

    Next, we define a map
       \begin{align}
      p: U(\Sigma) \times [0,1] \slash  \Lambda^{-1}(e) \longrightarrow
      [0,1]\ \, \text{where}\ p([(x,t)]) = t.
      \end{align}
  It is clear that $p$ is a smooth map. And for any $t \in [0, 1]$, we have
    $$p^{-1}(t) = U(\Sigma)\times \{ t\}
   \slash  \Lambda_t^{-1}(e) \cong U(\Sigma) \slash \mathrm{Ker}(\lambda_{\beta(t)}) =
    X(\Delta(t)).$$
  Notice that
   the set of charts $\{ \overline{\varphi}_I: U(I)\times [0,1]
   \slash \Lambda^{-1}(e) \rightarrow \mathbb{C}^I\times
   [0,1]\}_{I\in \Sigma^{(n)}}$ defined by~\eqref{Equ:Charts-2} restricted
   to each $p^{-1}(t)\cong X(\Delta(t))$
    give us exactly the set of all normal charts of $X(\Delta(t))$. So
    $p^{-1}(t)$ is an embedding submanifold of $U(\Sigma) \times [0,1] \slash  \Lambda^{-1}(e)$. \nn

   Let $(S^1)^m \subset (\mathbb{C}^*)^m$
   act on $U(\Sigma)\times [0,1] \subset \mathbb{C}^m\times [0,1]$ by the natural action
   \[  g\cdot (x,t) = (g\cdot x, t), \ \forall\,
    g\in (S^1)^m,  \forall\, (x,t) \in \mathbb{C}^m\times [0,1].  \]
   Then by~\eqref{Equ:Bundle-Trivial},
   we have a smooth action of $(S^1)^m \slash  \widehat{\Lambda}_0^{-1}(e) \cong (S^1)^n$
   on the whole $U(\Sigma) \times [0,1] \slash \Lambda^{-1}(e)$, whose restriction
    to each fiber $p^{-1}(t) \cong X(\Delta(t))$ is exactly
     the canonical $(S^1)^n$-action on $X(\Delta(t))$.\nn

      Obviously, $p$ is a smooth proper submersion.
     So Ehresmann's fibration theorem (see~\cite{Ehres50} or~\cite{AbrRobin67}) implies that
    $p: U(\Sigma) \times [0,1] \slash \Lambda^{-1}(e) \rightarrow [0,1]$
    is a locally trivial fiber bundle. Therefore, there exists a diffeomorphism
   from $p^{-1}(0) = X(\Delta(0))$ to $p^{-1}(1) = X(\Delta(1))$.
    But if we want to require the diffeomorphism to be
   equivariant with respect to the canonical $(S^1)^n$-action on $X(\Delta(0))$ and
   $X(\Delta(1))$, we need to refine the proof of Ehresmann's fibration theorem as follows.\nn

   By our previous discussion,
  $\{ U(I)\times [0,1] \slash \Lambda^{-1}(e) \, ;\,I\in \Sigma^{(n)} \}$
  is a finite open cover of $U(\Sigma) \times [0,1] \slash \Lambda^{-1}(e)$
  where each $ U(I)\times [0,1] \slash \Lambda^{-1}(e) $ is
   an invariant open set under the $(S^1)^n$-action.
  Then we can take a $(S^1)^n$-invariant partition of unity
   $$\{ f_I : U(I)\times [0,1] \slash \Lambda^{-1}(e) \rightarrow \R \, ; \, I\in \Sigma \}$$
    subordinate to this open cover, where each $f_I$ is a $(S^1)^n$-invariant function.  \nn

   Next, we take a local
   vector field $Y_I$ on $U(I)\times [0,1] \slash \Lambda^{-1}(e)$ so that
   \[ p_*(Y_I) = \frac{\partial}{\partial t}. \]
   And we set
   \[ \widetilde{Y}_I := \int_{(S^1)^n} Y_I\, d\mu  \]
  where $d\mu$ is a Haar measure on $(S^1)^n$. It is clear that
  \[ p_*(\widetilde{Y}_I) =  \frac{\partial}{\partial t}.\]
   And so we obtain a $(S^1)^n$-invariant vector field $\widetilde{Y}:= \sum_{I\in \Sigma} f_I \widetilde{Y}_I$
   on the whole space $U(\Sigma)\times [0,1] \slash \Lambda^{-1}(e)$. \nn

  Let $\psi_t$ be the flow of $\widetilde{Y}$ on $U(\Sigma)\times [0,1] \slash \Lambda^{-1}(e)$.
  Since $\widetilde{Y}$ is $(S^1)^n$-invariant, $\psi_t$ is also
  $(S^1)^n$-invariant. So
    \[ \psi=\psi_1:   X(\Delta(0)) \cong p^{-1}(0) \longrightarrow   p^{-1}(1) \cong X(\Delta(1)).  \]
   is a $(S^1)^n$-equivariant diffeomorphism. This proves the property (i) for
   $\psi$.\nn

   Furthermore, since $\psi$ is equivariant with respect to the $(S^1)^n$-action,
   Remark~\ref{Rem:Char-SubMfd} implies that $\psi$ will
   map each characteristic submanifold $X(\Delta(0))_i$ in $X(\Delta(0))$ to
   the characteristic submanifold $X(\Delta(1))_i$ in $X(\Delta(1))$ for any $i\in \Sigma^{(1)}$.
   Then by~\eqref{Equ:V(I)}, $\psi$ will map the open subset $V_I(0)$ in $X(\Delta(0))$ to
   $V_I(1)$ in $X(\Delta(1))$ for any $I\in
   \Sigma^{(n)}$. This finishes our proof.
 \end{proof}
  \n

  \begin{rem}
     For a simplicial complex $\Sigma$ and two complete non-singular toric fans $\Delta=(\Sigma,\beta)$
     and $\Delta'=(\Sigma,\beta')$, it is possible that
     $X(\Delta)$ is homeomorphic to
      $X(\Delta')$ while $\Delta$
     and $\Delta'$ can not be connected by any regular deformation. In this situation,
     it is not so clear whether we can find a diffeomorphism from $X(\Delta)$
     to $X(\Delta')$.
  \end{rem}

 \begin{thm} \label{thm:Fan-Deform-Nice}
  For any ominioriented topological toric manifold $X$, there always exists
  a regular deformation of the topological fan of $X$ into a nice topological fan.
   \end{thm}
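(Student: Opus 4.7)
The plan is to build the required deformation in two smooth stages, exploiting that the integer data $v_i \in \mathbb{Z}^n$ must stay constant under any smooth deformation (by discreteness, as observed just before the statement), and that the simplest candidate nice target is $\beta_i^\star = (v_i + \sqrt{-1}\cdot 0,\, v_i)$, which trivially satisfies $v_i \in \mathbb{Z}^n$ and $v_i \equiv v_i \mod 2$.

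First I would linearly deform the imaginary part to zero by setting $\beta_i(s) = (b_i + \sqrt{-1}(1-s)c_i,\, v_i)$ for $s \in [0,1]$. This should give a regular deformation because the non-singularity and completeness conditions for a topological fan depend only on $\{b_i\}$ realizing $\Sigma$ as a complete non-singular fan in $\mathbb{R}^n$ and on $\{v_i\}$ realizing the same $\Sigma$ as a complete non-singular fan in $\mathbb{R}^n$; the imaginary parts $c_i$ enter the smooth homomorphism $\lambda_{\beta_i}(g) = |g|^{b_i+\sqrt{-1}c_i}(g/|g|)^{v_i}$ only as twists of the angular component by $\log|g|$, and thus do not affect the Lie-group isomorphism condition on each maximal cone. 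At $s=1$ we arrive at $\beta_i = (b_i,\, v_i)$.

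In the second stage, starting from $(b_i,\, v_i)$, I would deform $b_i$ to $v_i$. Both $\{b_i\}$ and $\{v_i\}$ realize the same simplicial complex $\Sigma$ as complete non-singular fans in $\mathbb{R}^n$, and the ominiorientation forces $\det(b_i)_{i\in I}$ and $\det(v_i)_{i\in I}$ to share the same sign for every $I \in \Sigma^{(n)}$. The naive straight-line interpolation $b_i(t) = (1-t)b_i + t v_i$ may cause the polynomial $p_I(t) := \det(b_i(t))_{i\in I}$ to vanish at intermediate $t$; to handle this I would either perturb the path generically to avoid the finitely many real hypersurfaces $\{p_I = 0\}$, or, more conceptually, construct a smooth family of piecewise-linear fan maps $\phi_t : \mathbb{R}^n \to \mathbb{R}^n$ interpolating the identity and the canonical fan isomorphism sending $b_i \mapsto v_i$ (linear on each maximal cone) and set $b_i(t) := \phi_t(b_i)$. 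Concatenating the two stages and smoothing at the join yields a regular deformation of $\Delta$ into the nice fan $(\Sigma, \beta^\star)$.

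The main obstacle is verifying the second stage: guaranteeing that a path from $\{b_i\}$ to $\{v_i\}$ can be chosen inside the open subset of $(\mathbb{R}^n)^m$ parametrizing complete non-singular real fans realizing $\Sigma$ with the prescribed ominiorientation. The essential input is path-connectedness of this subset within a single ominiorientation class, which is where a generic-perturbation or explicit PL-interpolation argument is needed. If instead the paper's non-singularity condition is formulated as $\det_{\mathbb{C}}(b_i + \sqrt{-1}c_i)_{i\in I} \neq 0$ for each maximal $I$, then the argument simplifies dramatically: the complement of a finite union of complex hypersurfaces in $(\mathbb{C}^n)^m$ is automatically connected, so a single-stage generic smooth path from $\{b_i + \sqrt{-1}c_i\}$ directly to $\{v_i\}$ already suffices, with no need to separate the two stages.
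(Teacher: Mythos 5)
Your first stage (sending $c_i$ to $0$ linearly) is exactly the paper's Step 1 and is fine. The second stage, however, is based on a false premise and aims at a target that is in general unreachable. You assert that ``both $\{b_i\}$ and $\{v_i\}$ realize the same simplicial complex $\Sigma$ as complete non-singular fans in $\mathbb{R}^n$.'' This is not part of the definition of a topological fan: the integral vectors $v_i$ are only required to satisfy a unimodularity condition on each maximal simplex ($\{v_i\}_{i\in I}$ a $\mathbb{Z}$-basis for $I\in\Sigma^{(n)}$); the cones they span may overlap and need not form a fan at all. Indeed this failure is precisely what separates topological toric manifolds from toric manifolds. Consequently your proposed endpoint $\beta_i^\star=(v_i,v_i)$ need not be a complete non-singular topological fan, and even when it is, reaching it would (by the first bullet point in Section~2) exhibit $X$ as a toric manifold. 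Combined with Lemma~\ref{Lem:Deformation} this would make every topological toric manifold $(S^1)^n$-equivariantly diffeomorphic to a toric manifold, contradicting the paper's own example $\mathbb{C}P^2\#\mathbb{C}P^2$, which carries a nice topological fan but is not a toric manifold. No amount of generic perturbation or PL interpolation can rescue a path whose endpoint does not lie in the space you are trying to stay inside; and your fallback observation about complements of complex hypersurfaces being connected is irrelevant, since the fan condition is not merely the nonvanishing of the determinants $\det(b_i)_{i\in I}$ but the global requirement that the cones tile $\mathbb{R}^n$.

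The key point you missed is that ``nice'' only demands $c_i=0$, $b_i\in\mathbb{Z}^n$ and $b_i\equiv v_i\bmod 2$ --- it does \emph{not} demand $b_i=v_i$. The paper exploits this slack: after killing the $c_i$, it perturbs each $b_i$ slightly to a rational vector $b_i'$ (a small deformation, hence regular since the fan condition is open), rescales by a large even integer $N$ so that $\widehat{b}_i=Nb_i'\in 2\mathbb{Z}^n$ (rescaling rays is harmless), and finally moves $\widehat{b}_i$ to $u_i=\widehat{b}_i-v_i$, which for $N$ large is an arbitrarily small angular perturbation of $\widehat{b}_i$ and satisfies $u_i\equiv v_i\bmod 2$ because $\widehat{b}_i\in 2\mathbb{Z}^n$. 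Every move is either a small perturbation or a scaling, so the path never needs to leave the (possibly disconnected, in the large) parameter space of complete non-singular fans realizing $\Sigma$ --- which is exactly the connectivity problem your approach runs into and cannot solve.
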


 \begin{proof}
  Suppose $X$ is a $2n$-dimensional ominioriented topological toric manifold whose topological fan
  is $\Delta(X) = (\Sigma(X), \beta(X))$, where $\Sigma(X)$ and $\beta(X)$
  are defined by~\eqref{Equ:Fan} and~\eqref{Equ:Beta}, respectively.
  We will define a sequence of regular deformations of $\Delta(X)$ below.
  But for the sake of conciseness, we will define each deformation
   in terms of the deformation on the $b_i(X)$ and $c_i(X)$ components. \nn

   Step 1: we deform all $c_i(X)$ to $0$ simultaneously by $(1-t)c_i(X)$,
    $0 \leq  t \leq 1$,  $i\in [m] := \Sigma^{(1)}(X)$.
   Obviously,
      the deformation is regular. We denote the new topological fan obtained from this deformation by
     $\widetilde{\Delta}^{(1)} = (\Sigma(X), \widetilde{\beta}^{(1)})$ where
     $$ \widetilde{\beta}^{(1)}_i = (b_i(X)+ \sqrt{-1} \cdot 0, v_i(X))
     \in \R^n\times \Z^n \subset \mathbb{C}^n\times \Z^n, \ i\in [m].$$
     \n

   Step 2: we deform each $b_i(X)$ in  $\widetilde{\Delta}^{(1)}$ slightly into a
     vector $b'_i(X)$ with rational coordinates.
        Notice that the condition on $\{ b_i(X), i\in [m] \}$ in the definition of
        a complete non-singular topological fan is
        stable under small deformations. So
        we can choose our deformation of $\widetilde{\Delta}^{(1)}$ here to be regular.
        We denote the new topological fan obtained from this deformation by
      $\widetilde{\Delta}^{(2)} = (\Sigma(X), \widetilde{\beta}^{(2)})$ where
       $$ \widetilde{\beta}^{(2)}_i = (b'_i(X)+ \sqrt{-1} \cdot 0, v_i(X))
     \in \Q^n\times \Z^n \subset \mathbb{C}^n\times \Z^n, \ i\in [m].$$

     \n

   Step 3: we can choose a very large positive even integer $N$ so that
   \[ \widehat{b}_i(X) := N\cdot b'_i(X) \in 2\Z^n \; \text{for any}\ i\in [m]. \]
    Obviously $\{ (1-t)b'_i(X) + t\widehat{b}_i(X),\, i\in [m]\}$
    determines a regular deformation of $\widetilde{\Delta}^{(2)}$.
   We denote the new topological fan obtained from this
   deformation by $\widetilde{\Delta}^{(3)} = (\Sigma(X), \widetilde{\beta}^{(3)})$ where
   \[  \widetilde{\beta}^{(3)}_i = (\widehat{b}_i(X)+ \sqrt{-1} \cdot 0, v_i(X))
     \in 2\Z^n\times \Z^n \subset \mathbb{C}^n\times \Z^n, \ i\in [m].  \]
   \nn

  Step 4:  Let $u_i := \widehat{b}_i(X) - v_i(X)\in \Z^n \ \text{for each}\ i\in [m]$.
         Notice in step 3, if we choose the integer $N$ large
         enough, we can assume $\| v_i(X) \|$ is by far smaller than
         $\| \widehat{b}_i(X) \|$. Then the distance between the unit vectors
           $u_i \slash \| u_i \|$ and $ \widehat{b}_i(X)\slash \|  \widehat{b}_i(X)\|$ can be
           made arbitrarily small. It is easy to see that if
            $u_i \slash \| u_i \|$ and $ \widehat{b}_i(X)\slash \|  \widehat{b}_i(X)\|$
           are very close,
          $\{ (1-t)\widehat{b}_i(X) + tu_i, \ i\in [m]
             \}$
         will determine a regular deformation of $\widetilde{\Delta}^{(3)}$.
           We denote the new topological fan obtained from this deformation by
      $\widetilde{\Delta}^{(4)}=(\Sigma(X), \widetilde{\beta}^{(4)})$ where
     \[  \widetilde{\beta}^{(4)}_i = (u_i+ \sqrt{-1} \cdot 0, v_i(X))
     \in \Z^n\times \Z^n \subset \mathbb{C}^n\times \Z^n, \ i\in [m].  \]
        For any $i\in [m]$, we have
        $u_i - v_i(X) = \widehat{b}_i(X) - 2v_i(X) \in 2\Z^n$, so
        $$u_i \equiv v_i(X) \mod 2.$$
        This means that the topological fan $\widetilde{\Delta}^{(4)}$ is nice.
        Note that the integral vector $u_i$ is not necessarily
        primitive.
      By combining the above four steps, we then obtain a regular
        deformation from $\Delta(X)$ to a nice topological fan
        $\widetilde{\Delta}^{(4)}$.
   \end{proof}

    From Lemma~\ref{Lem:Deformation} and Theorem~\ref{thm:Fan-Deform-Nice},
     we immediately get the following.

   \begin{thm} \label{thm:Nice-Fan}
    For any ominioriented topological toric manifold $X$ of dimension $2n$, there
    exists a $(S^1)^n$-equivariant diffeomorphism from $X$ to an ominioriented
   topological toric manifold $\widetilde{X}$ whose topological fan is
   nice.
   \end{thm}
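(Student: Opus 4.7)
The plan is to chain together the two main results of Section 3 with the fundamental correspondence between ominioriented topological toric manifolds and complete non-singular topological fans recalled in Section 2.

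First, I would start with $X$ a $2n$-dimensional ominioriented topological toric manifold. By the correspondence stated before \eqref{Equ:Cox-Constr} (Theorem 8.1 of \cite{MasIshFuk2010}), there is an ominiorientation-preserving equivariant diffeomorphism $X \cong X(\Delta(X))$, where $\Delta(X) = (\Sigma(X), \beta(X))$ is the topological fan of $X$ defined via \eqref{Equ:Fan} and \eqref{Equ:Beta}.

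Next, I would invoke Theorem~\ref{thm:Fan-Deform-Nice} to produce a regular deformation $\Delta(t) = (\Sigma(X), \beta(t))$, $0 \leq t \leq 1$, of complete non-singular topological fans of dimension $n$ with $\Delta(0) = \Delta(X)$ and $\Delta(1) = \widetilde{\Delta}$ a nice topological fan. Applying Lemma~\ref{Lem:Deformation} to this family yields a diffeomorphism
\[
\psi : X(\Delta(0)) \longrightarrow X(\Delta(1))
\]
which, by property (i), is equivariant with respect to the canonical $(S^1)^n$-actions on the two ends.

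Finally, I would set $\widetilde{X} := X(\widetilde{\Delta}) = X(\Delta(1))$. By construction $\widetilde{X}$ is an ominioriented topological toric manifold whose topological fan $\widetilde{\Delta}$ is nice, and composing $X \cong X(\Delta(X)) = X(\Delta(0))$ with $\psi$ gives the required $(S^1)^n$-equivariant diffeomorphism from $X$ to $\widetilde{X}$. Since both ingredients are already established, there is essentially no obstacle here; the only point worth being careful about is that the identification $X \cong X(\Delta(X))$ is indeed $(S^1)^n$-equivariant (which is part of the content of the correspondence in Section 2), so the composition is genuinely equivariant and not merely a diffeomorphism.
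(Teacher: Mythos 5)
Your proposal is correct and follows exactly the route the paper intends: the paper derives this theorem immediately by combining Theorem~\ref{thm:Fan-Deform-Nice} (regular deformation of $\Delta(X)$ into a nice fan) with Lemma~\ref{Lem:Deformation} (the $(S^1)^n$-equivariant diffeomorphism between the two ends of a regular deformation). Your added remark about checking that the identification $X \cong X(\Delta(X))$ is itself equivariant is a sensible precaution and is indeed covered by the correspondence recalled in Section 2.
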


    Moreover, we have the following corollary on the transition
    functions of a topological toric manifold.\nn

  \begin{cor} \label{Cor:Trans-Func}
    Any topological toric manifold $X$ of dimension $2n$ can be covered
    by finitely many
    open charts $\{ \phi_j: V_j \rightarrow \mathbb{C}^n \}_{1\leq j \leq
    r}$ so that each $V_j$ is a $(\mathbb{C}^*)^n$-invariant open subset of $X$
    and, all the transition functions between these charts are Laurent
     monomials of $z_1,\ldots, z_n, \overline{z}_1,\ldots, \overline{z}_n$.
   \end{cor}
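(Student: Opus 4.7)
The plan is to reduce this corollary to the case of a nice topological fan, which by the discussion preceding the definition of nice topological fan already yields the desired atlas. First I would equip $X$ with an arbitrary ominiorientation, so that the associated topological fan $\Delta(X)$ is defined. By Theorem~\ref{thm:Nice-Fan}, there exists an $(S^1)^n$-equivariant diffeomorphism $\psi : X \to \widetilde{X}$ where $\widetilde{X} = X(\widetilde{\Delta})$ is an ominioriented topological toric manifold whose topological fan $\widetilde{\Delta} = (\Sigma, \widetilde{\beta})$ is nice. By the discussion following the definition of nice topological fan, $\widetilde{X}$ admits a finite atlas of normal charts $\{\varphi_I : V_I \to \mathbb{C}^n\}_{I \in \Sigma^{(n)}}$ whose transition functions are Laurent monomials of $z_1,\ldots,z_n,\overline{z}_1,\ldots,\overline{z}_n$, and each $V_I$ is $(\mathbb{C}^*)^n$-invariant in $\widetilde{X}$ by~\eqref{Equ:V(I)}.

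Next I would pull these charts back to $X$ by setting $U_I := \psi^{-1}(V_I)$ and $\phi_I := \varphi_I \circ (\psi|_{U_I})$ for each $I \in \Sigma^{(n)}$. The transition functions satisfy
\[
\phi_J \circ \phi_I^{-1} = \varphi_J \circ \psi \circ \psi^{-1} \circ \varphi_I^{-1} = \varphi_J \circ \varphi_I^{-1},
\]
so they are exactly the Laurent monomial transition functions of the normal charts on $\widetilde{X}$. Thus $\{\phi_I : U_I \to \mathbb{C}^n\}_{I \in \Sigma^{(n)}}$ is a finite open cover of $X$ with the required Laurent monomial transition functions.

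The one point that needs a short verification is that each $U_I$ is $(\mathbb{C}^*)^n$-invariant in $X$, since $\psi$ is only $(S^1)^n$-equivariant, not $(\mathbb{C}^*)^n$-equivariant. For this I would use~\eqref{Equ:V(I)}, which expresses $V_I$ as the complement in $\widetilde{X}$ of a union of characteristic submanifolds, together with Remark~\ref{Rem:Char-SubMfd}, which identifies the characteristic submanifolds of $\widetilde{X}$ with respect to the $(\mathbb{C}^*)^n$-action and the canonical $(S^1)^n$-action. Since $\psi$ is an $(S^1)^n$-equivariant diffeomorphism, it sends each characteristic submanifold of $X$ to a characteristic submanifold of $\widetilde{X}$ (these being precisely the connected submanifolds fixed pointwise by some $S^1$-subgroup). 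Therefore $U_I = \psi^{-1}(V_I)$ is the complement in $X$ of a union of characteristic submanifolds of $X$, and each such characteristic submanifold is $(\mathbb{C}^*)^n$-invariant by definition, so $U_I$ is $(\mathbb{C}^*)^n$-invariant.

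The only mildly delicate step is this last invariance check; the rest of the argument is a direct pullback using Theorem~\ref{thm:Nice-Fan}. Once invariance is secured, the atlas $\{\phi_I : U_I \to \mathbb{C}^n\}_{I \in \Sigma^{(n)}}$ exhibits the required structure on $X$, and the corollary follows.
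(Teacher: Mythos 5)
Your proposal is correct and follows essentially the same route as the paper: pass to a nice fan via Theorem~\ref{thm:Nice-Fan} (i.e.\ Theorem~\ref{thm:Fan-Deform-Nice} plus Lemma~\ref{Lem:Deformation}) and pull back the normal charts of $\widetilde{X}$ along $\psi$. The invariance check you carry out at the end is exactly the argument the paper packages as property (ii) of Lemma~\ref{Lem:Deformation} (via Remark~\ref{Rem:Char-SubMfd} and~\eqref{Equ:V(I)}), which identifies $\psi^{-1}(\widetilde{V}_I)$ with the normal chart domain $V_I$ of $X$ itself.
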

   \begin{proof}
   By choosing an ominiorientation on $X$, we can assume $X=X(\Delta)$ where $\Delta=(\Sigma,\beta)$ is a
   complete non-singular topological fan.
    Then by Theorem~\ref{thm:Fan-Deform-Nice}, there exists a diffeomorphism
   $\psi: X \rightarrow \widetilde{X} = X(\widetilde{\Delta})$
   where $\widetilde{\Delta}=(\Sigma, \widetilde{\beta})$ is a nice topological fan. Moreover,
    for any $I\in \Sigma^{(n)}$, let
   $\varphi_I: V_I \rightarrow \mathbb{C}^n$ and
   $\widetilde{\varphi}_I: \widetilde{V}_I \rightarrow \mathbb{C}^n$ be the normal charts of
    $X$ and $\widetilde{X}$, respectively. Then by Lemma~\ref{Lem:Deformation}, we have
     $$\psi(V_I) =\widetilde{V}_I.$$

    Since $(\Sigma, \widetilde{\beta})$ is a nice topological fan,
  the transition functions between the charts
  $\{ \widetilde{\varphi}_I : \widetilde{V}_I \rightarrow \mathbb{C}^n, \
   I\in \Sigma^{(n)}\}$ are all
   Laurent monomials of $z_1,\ldots, z_n, \overline{z}_1,\ldots, \overline{z}_n$.
   Then
   $\{  \widetilde{\varphi}_I\circ\psi :  V_I=\psi^{-1} ( \widetilde{V}_I)\rightarrow \mathbb{C}^n,
   \, I\in \Sigma^{(n)} \}$ are finitely many open charts
   which cover $X$ and clearly satisfy all our requirements.
   \end{proof}
   \n

   \begin{rem}
     For an open chart
    $\widetilde{\varphi}_I\circ\psi :  V_I=\psi^{-1} ( \widetilde{V}_I)\rightarrow \mathbb{C}^n
    $ on $X$ in the proof of Corollary~\ref{Cor:Trans-Func}, the $(\mathbb{C}^*)^n$-action
      in $V_I$ may not be equivariantly homeomorphic to a smooth linear representation of
     $(\mathbb{C}^*)^n$ on $\mathbb{C}^n$ via the map $\widetilde{\varphi}_I\circ\psi$, but
     it does via the map $\varphi_I: V_I \rightarrow \mathbb{C}^n$.
     The reason is that the diffeomorphism $\psi$ we obtain in Lemma~\ref{Lem:Deformation}
    is not necessarily $(\mathbb{C}^*)^n$-equivariant.
    \end{rem}

      There should be some special geometrical properties on a topological toric manifold
   implied by the existence of the kind of atlas
   as described in Corollary~\ref{Cor:Trans-Func}. It is interesting to
    see what these geometric properties are.\nn

  In addition, by Theorem 10.2 in~\cite{MasIshFuk2010} and our preceding discussion, we can easily prove
    the following theorem for quasitoric manifolds.\nn

   \begin{thm} \label{thm:Quasitoric}
    Any $2n$-dimensional quasitoric manifold over a simple convex polytope $P^n$ can be covered by
    finitely many $(S^1)^n$-invariant open charts
    $$\phi_j : V_j \rightarrow \mathbb{C}^n , \,\ 1\leq j \leq r\ \text{ where $r=$
     the number of vertices of $P^n$, } $$
   whose
   transition functions are all Laurent monomials of
   $z_1,\ldots, z_n, \overline{z}_1,\ldots, \overline{z}_n$.
   \end{thm}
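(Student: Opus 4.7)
The plan is to reduce the statement to Corollary~\ref{Cor:Trans-Func} by way of Theorem 10.2 in~\cite{MasIshFuk2010}. First I would invoke that theorem to promote the given quasitoric manifold $M^{2n}$ over the simple convex polytope $P^n$ to a topological toric manifold $X$; more precisely, $M^{2n}$ can be equipped with a smooth action of $(\mathbb{C}^*)^n$ extending the given $(S^1)^n$-action so that it becomes a topological toric manifold, with $P^n$ still realized as the orbit space of the canonical $(S^1)^n$-action. In particular, the associated simplicial complex $\Sigma$ from~\eqref{Equ:Fan} is the boundary complex of the simplicial polytope dual to $P^n$, so the set $\Sigma^{(n)}$ of maximal simplices is in bijection with the vertices of $P^n$.

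Next I would apply Corollary~\ref{Cor:Trans-Func} to this topological toric manifold structure on $M^{2n}$. Choosing an ominiorientation, we get $M^{2n} \cong X(\Delta)$ for some complete non-singular topological fan $\Delta=(\Sigma,\beta)$, and after the regular deformation supplied by Theorem~\ref{thm:Fan-Deform-Nice} together with the $(S^1)^n$-equivariant diffeomorphism from Lemma~\ref{Lem:Deformation}, we obtain a covering of $M^{2n}$ by open charts $\{\widetilde{\varphi}_I \circ \psi : V_I \to \mathbb{C}^n\}_{I\in\Sigma^{(n)}}$ whose transition functions are Laurent monomials in $z_1,\ldots,z_n,\overline{z}_1,\ldots,\overline{z}_n$. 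Since each $V_I$ is $(\mathbb{C}^*)^n$-invariant and $(S^1)^n\subset(\mathbb{C}^*)^n$, each chart domain is in particular $(S^1)^n$-invariant, which is what the theorem requires.

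Finally I would verify the count on the number of charts: the atlas is indexed by $I\in\Sigma^{(n)}$, and by the identification above this index set has cardinality equal to the number of vertices of $P^n$, as asserted. The only subtlety I anticipate is making sure the invariance is stated under the correct torus: the diffeomorphism $\psi$ produced by Lemma~\ref{Lem:Deformation} is only $(S^1)^n$-equivariant, not $(\mathbb{C}^*)^n$-equivariant, but this is exactly the level of equivariance the theorem asks for, so no further adjustment is needed.
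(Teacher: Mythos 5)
Your proposal is correct and follows essentially the same route as the paper: invoke Theorem 10.2 of \cite{MasIshFuk2010} to relate $M^{2n}$ to a topological toric manifold $X(\Delta)$, arrange for $\Delta$ to be nice via Theorem~\ref{thm:Nice-Fan}, pull back the atlas of Corollary~\ref{Cor:Trans-Func}, and count the charts by $|\Sigma^{(n)}|$, which equals the number of vertices of $P^n$. The only caveat is one of phrasing: the paper works with an $(S^1)^n$-equivariant \emph{homeomorphism} $\psi: M^{2n}\to X(\Delta)$ rather than asserting a smooth $(\mathbb{C}^*)^n$-action on $M^{2n}$ itself (the transported smooth structure need not agree with the original one, as the remark following the theorem notes), but since the chart domains pull back to $(S^1)^n$-invariant sets under $\psi$ and the theorem only demands $(S^1)^n$-invariance, your argument goes through unchanged.
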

   \begin{proof}
     For any $2n$-dimensional quasitoric manifold $M^{2n}$ over a simple convex polytope $P^n$, there
     exists a topological toric manifold $X(\Delta)$ so that
     $M^{2n}$ is equivariantly homeomorphic to $X(\Delta)$ as
     $(S^1)^n$-manifold. In the topological fan $\Delta = (\Sigma, \beta)$, the simplical complex
     $\Sigma$ is the boundary of the dual simplicial polytope of $P^n$.
     So the number of $(n-1)$-simplices in $\Sigma$ equals the number of vertices of $P^n$.
       Moreover, we can require the topological fan $\Delta$ is nice by
     Theorem~\ref{thm:Nice-Fan}. Let $\psi$ be an $(S^1)^n$-equivariant
     homeomorphism from $M^{2n}$ to $X(\Delta)$.
     Then we can use $\psi$ to pull back the atlas on $X(\Delta)$ described
     in Corollary~\ref{Cor:Trans-Func} to an atlas on $M^{2n}$, which
     will satisfy all our requirements.
   \end{proof}
  \n
   \begin{rem}
       For a $2n$-dimensional quasitoric manifold $M^{2n}$,
       the set of $(S^1)^n$-invariant open charts described in Theorem~\ref{thm:Quasitoric}
       will determine
       a smooth structure on $M^{2n}$. But it is not clear whether
       this smooth structure should agree with the original smooth
       structure of $M^{2n}$ (see the remark after the Theorem 10.2
       in~\cite{MasIshFuk2010}).\\
   \end{rem}

   \section{Nice topological toric manifolds and
   real algebraic representation of $(\mathbb{C}^*)^n$}

  \begin{defi}[Nice Topological Toric Manifold]
   A topological toric manifold $X$ is called \textit{nice} if there exists
   an ominiorientation on $X$
   so that the associated topological fan is nice.
  \end{defi}

  \begin{defi}[Real Algebraic Linear Representation]
    A faithful smooth linear representation $\rho$
    of $(\mathbb{C}^*)^n$ on $\mathbb{C}^n$ is called \textit{real algebraic} if
    $$\rho(z_1,\cdots, z_n) = (h_1(z_1,\ldots,z_n,\overline{z}_1,\ldots, \overline{z}_n),
     \cdots, h_{n}(z_1,\ldots, z_n, \overline{z}_1,\ldots,
     \overline{z}_n)) \in (\mathbb{C}^*)^n$$
    where $h_j(z_1,\ldots,z_n,\overline{z}_1,\ldots, \overline{z}_n)$
    is a Laurent monomial of $z_1,\ldots, z_n, \overline{z}_1,\ldots,
    \overline{z}_n$ for each $1\leq j \leq n$.
  \end{defi}

   By the discussion in section 2 of~\cite{MasIshFuk2010}, any faithful smooth
   complex $n$-dimensional representation $V$ of $(\mathbb{C}^*)^n$ can be written as
    \[ V = V(\bigoplus^n_{i=1}\chi^{\alpha_i}), \  \text{where}\
     \chi^{\alpha_i} \in  \mathrm{Hom}((\mathbb{C}^*)^n, \mathbb{C}^*).
     \]
     We can identify $\mathrm{Hom}((\mathbb{C}^*)^n, \mathbb{C}^*)$ with
    the row vector space of $\mathbb{C}^n\times \Z^n$ and write
     \[  \alpha_i = (x_i + \sqrt{-1} y_i, u_i) \in \mathbb{C}^n\times \Z^n \]
     where $x_i,y_i \in \R^n$ and $u_i \in \Z^n$ are row
     vectors.
    By Lemma 2.1 in~\cite{MasIshFuk2010}, it is easy to see the following.
  \[
     \text{$V(\bigoplus^n_{i=1}\chi^{\alpha_i})$ is
     real algebraic  $\Longleftrightarrow$ $y_i = 0$ and $x_i\in \Z^n$ with $x_i\equiv u_i
     \ \mathrm{mod}\ 2$ for all $1\leq i\leq n$} \]

   Suppose $\{ \beta_i \}_{1\leq i \leq n}$ is a \textit{dual set} of
   $\{ \alpha_i \}_{1\leq i \leq n}$, i.e.
   $\lambda_{\beta_i} \in \mathrm{Hom}(\mathbb{C}^*,
   (\mathbb{C}^*))^n$ so that
   \[ \lambda_{\beta_j}\circ \chi^{\alpha_i} = \delta_{ij}
   id_{(\mathbb{C}^*)^n},\ \, \chi^{\alpha_i}\circ\lambda_{\beta_j} = \delta_{ij}
   id_{\mathbb{C}^*}\ \, \text{for all}\ 1\leq i,j\leq n.
   \]
   If we write $\beta_i = (b_i + \sqrt{-1}c_i, v_i) \in \mathbb{C}^n\times \Z^n$,
    it is easy to see the following.
   \[
     \text{$V(\bigoplus^n_{i=1}\chi^{\alpha_i})$ is
     real algebraic  $\Longleftrightarrow$ $c_i = 0$ and $b_i\in \Z^n$ with $b_i \equiv v_i
     \ \mathrm{mod}\ 2$ for all $1\leq i\leq n$} \]
    \n

    Then by our discussion at the end of section 2,
     for any nice topological fan $\Delta = (\Sigma, \beta)$ of dimension $n$,
   the corresponding nice topological toric manifold $X(\Delta)$ can be covered by finitely many
   $(\mathbb{C}^*)^n$-invariant open charts
   $\{ \varphi_I : V_I \rightarrow \mathbb{C}^n\}_{I \in \Sigma^{(n)}}$
       so that for each $\varphi_I : V_I \rightarrow \mathbb{C}^n$, the $(\mathbb{C}^*)^n$-action in $V_I$ is
       equivariant homeomorphic to a real algebraic linear representation
       of $(\mathbb{C}^*)^n$ in $\mathbb{C}^n$ via the map $\varphi_I$.\nn

    Similar to the description of toric manifolds in Theorem~\ref{thm:Toric-Trans-Func},
    we have the following
   description of nice topological toric manifolds in terms of transition functions
   of charts up to (weakly) equivariant diffeomorphism. \nn

     \begin{thm}\label{thm:Top-Toric-Trans-Func}
        Suppose $M^{2n}$ is a $2n$-dimensional closed smooth manifold with an
       effective smooth action of $(\mathbb{C}^*)^n$ having an open dense orbit.
       Then $M^{2n}$ is (weakly) equivariantly
       diffeomorphic to a nice topological toric manifold if and only if
        $M^{2n}$ can be covered by finitely many $(\mathbb{C}^*)^n$-invariant open charts
       $\{ \phi_j: V_j \rightarrow \mathbb{C}^n\}_{1\leq j \leq r}$
       so that all the transition functions between these charts are Laurent monomials of
        $z_1,\ldots, z_n, \overline{z}_1,\ldots, \overline{z}_n$
        and, for at least one chart $\phi_j: V_j \rightarrow \mathbb{C}^n$,
        the $(\mathbb{C}^*)^n$-action in $V_j$ is equivariant homeomorphic to a real algebraic
        linear representation of $(\mathbb{C}^*)^n$ on $\mathbb{C}^n$ via the map $\phi_j$.
   \end{thm}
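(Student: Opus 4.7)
The plan is to adapt the proof strategy used for Theorem~\ref{thm:Toric-Trans-Func} to the ``nice'' setting, replacing ``algebraic linear representation'' by ``real algebraic linear representation'' and ``Laurent monomials of $z_1,\ldots,z_n$'' by ``Laurent monomials of $z_1,\ldots,z_n,\overline{z}_1,\ldots,\overline{z}_n$''. The discussion immediately preceding the theorem already gives one direction on the nose; the nontrivial content is propagation of real algebraicity across the atlas.

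For the necessity $(\Rightarrow)$, starting from a (weakly) equivariant diffeomorphism $F:M^{2n}\to X(\Delta)$ onto a nice topological toric manifold, I would simply pull back the normal chart atlas $\{\varphi_I:V_I\to\mathbb{C}^n\}_{I\in\Sigma^{(n)}}$ of $X(\Delta)$. By the paragraph preceding the theorem, the transitions $\varphi_J\circ\varphi_I^{-1}$ are Laurent monomials in $z,\overline{z}$ and the action read through each $\varphi_I$ is a real algebraic representation. Setting $\phi_I:=\varphi_I\circ F$ leaves the transitions unchanged, and the action on $V_I\subset M^{2n}$ transported by $\phi_I$ is equivariantly homeomorphic (after composing with the automorphism $\sigma$ intertwining the two $(\mathbb{C}^*)^n$-actions) to a real algebraic linear representation in at least one chart.

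For the sufficiency $(\Leftarrow)$, the key step is to show that real algebraicity propagates through the atlas. Suppose $\phi_{j_0}$ is the distinguished chart in which the action is equivariantly homeomorphic to a real algebraic representation $\rho_{j_0}$, and fix any other chart $\phi_j$ with action $\rho_j$. Writing $T=\phi_j\circ\phi_{j_0}^{-1}$, each component $T_i$ is a Laurent monomial in $z,\overline{z}$, so the crucial multiplicative-homogeneity identity $T(\lambda\cdot z)=T(\lambda)\cdot T(z)$ (coordinatewise product on $(\mathbb{C}^*)^n$) holds. Combining this with equivariance of $T$ on the open dense orbit yields $\rho_j(g)=T(\rho_{j_0}(g))$, a composition of Laurent monomials in $g,\overline{g}$, hence again a real algebraic representation. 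Thus \emph{every} chart has real algebraic action.

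Once this is established, $M^{2n}$ satisfies the definition of a topological toric manifold: $\{V_j\}$ is a finite $(\mathbb{C}^*)^n$-invariant open cover with each $V_j$ equivariantly diffeomorphic to a faithful smooth linear representation space of $(\mathbb{C}^*)^n$. Choose an ominiorientation and form the topological fan $\Delta(M^{2n})=(\Sigma(M^{2n}),\beta(M^{2n}))$ as in~\eqref{Equ:Fan}. For each $i\in\Sigma^{(1)}(M^{2n})$, the vector $\beta_i=(b_i+\sqrt{-1}c_i,v_i)$ can be read off from the real algebraic representation $\rho_j$ of any chart $V_j$ meeting the characteristic submanifold $M_i$; by the characterization of real algebraicity recalled just before the theorem this forces $c_i=0$ and $b_i\in\Z^n$ with $b_i\equiv v_i\pmod 2$, so $\Delta(M^{2n})$ is nice and $M^{2n}$ is itself a nice topological toric manifold. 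The main technical obstacle is the propagation step: one must check carefully that the homogeneity identity $T(\lambda\cdot z)=T(\lambda)\cdot T(z)$ truly forces $\rho_j=T\circ\rho_{j_0}$ back into the class of real algebraic representations (not a wider class of Laurent-monomial-type maps), which reduces to the observation that a composition of Laurent monomials in $z,\overline{z}$ is again such a Laurent monomial.
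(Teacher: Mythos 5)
Your proof is correct and follows essentially the same route as the paper: the necessity direction pulls back the normal-chart atlas of a nice $X(\Delta)$ exactly as in the paper's proof, and the sufficiency direction is the propagation-of-real-algebraicity argument that the paper invokes by analogy with Theorem~\ref{thm:Toric-Trans-Func} and explicitly leaves as an exercise. Your verification of the multiplicativity identity $T(\lambda\cdot z)=T(\lambda)\cdot T(z)$ for Laurent-monomial transitions and the resulting formula $\rho_j=T\circ\rho_{j_0}$ is precisely the content of that exercise.
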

   \begin{proof}
    Suppose there exists a (weakly) equivariant diffeomorphism $\psi$ from
    $M^{2n}$ to a nice topological toric manifold $X= X(\Delta)$ where
    $\Delta=(\Sigma,\beta)$ is a nice topological fan. By our previous discussion,
    $X(\Delta)$ can be covered by finitely many
   $(\mathbb{C}^*)^n$-invariant open charts
   $\{ \varphi_I : V_I \rightarrow \mathbb{C}^n\}_{I \in \Sigma^{(n)}}$
       so that
        the $(\mathbb{C}^*)^n$-action in each $V_I$ is
       equivariant homeomorphic to a real algebraic linear representation
       of $(\mathbb{C}^*)^n$ in $\mathbb{C}^n$ via the map $\varphi_I$.
    It is clear that each $\psi^{-1}(V_I)$ is a $(\mathbb{C}^*)^n$-invariant open subset of $M^{2n}$.
   So we can cover $M^{2n}$ by finitely many
    $(\mathbb{C}^*)^n$-invariant open charts
    $$\{ \phi_I \circ \psi: \psi^{-1}(V_I) \rightarrow \mathbb{C}^n \}_{I \in \Sigma^{(n)}}$$
   whose transition functions are all Laurent monomials of
        $z_1,\ldots, z_n, \overline{z}_1,\ldots, \overline{z}_n$.
       Moreover, since $\psi$ is a
   (weakly) equivariant diffeomorphism, so
   the $(\mathbb{C}^*)^n$-action in each $\psi^{-1}(V_I) \subset M^{2n}$ is
   (weakly) equivariantly differomorphic to a real algebraic linear representation
   of $(\mathbb{C}^*)^n$ in $\mathbb{C}^n$ via $\phi_I \circ \psi$. This proves the ``only if''
   part of the theorem.\nn

     The ``if'' part of the theorem is very similar to the proof of
     Theorem~\ref{thm:Toric-Trans-Func}, so we leave it as an exercise to the reader.
   \end{proof}

    \nn

    \nn

  \section*{Acknowledgements}
     Theorem~\ref{thm:Toric-Trans-Func} in this paper was told to the
     author by Mikiya Masuda and has been
     known to him and perhaps some other people for quite a while.
     In addition, the author is indebted to H.~Ishida for
     clarifying many details in the proof of Lemma~\ref{Lem:Deformation} and
    some important comments.\\

\end{document}